\def\N{\mathbb N}
\def\Z{\mathbb Z}
\def\R{\mathbb R}
\def\Q{\mathbb Q}
\def\C{\mathbb C}
\def\A{\mathcal A}
\def\B{\mathcal B}
\def\Oo{\mathcal{O}}
\def\D{\mathcal{D}}
\def\K{\mathbb{K}}

\def\frp{\operatorname{frp}}
\def\Per{\operatorname{Per}}

\def\inp{\operatorname{inp}}
\def\frp{\operatorname{frp}}
\def\p{\mathfrak{p}}

\def\pfz{\begin{proof}}
\def\pfk{\end{proof}}
\documentclass[12pt]{amsart}
\usepackage[lmargin=2.5cm, rmargin=2.5cm,bottom=2.5cm,top=2.5cm]{geometry}
\usepackage{amssymb,amsmath,mathdots}
\usepackage{graphicx}

\usepackage{epstopdf}

\usepackage{subfigure}
\usepackage{comment}
\usepackage{enumerate}

\textwidth 6in \textheight 8.5in \oddsidemargin 0.5cm \evensidemargin 0.5cm
\topmargin -1cm

\newtheorem{theorem}{Theorem}[section]
\newtheorem{proposition}[theorem]{Proposition}

\newtheorem{lemma}[theorem]{Lemma}
\newtheorem{defn}[theorem]{Definition}

\begin{document}
\title{Periodic representations in Salem bases}
\author[AMS]{Tom\'a\v s V\'avra}
\address{Department of Algebra\\
Charles University\\
Sokolovsk\'a 83\\
186 75 Praha 8\\
Czech Republic}
\email{vavrato@gmail.com}

\begin{abstract}
We prove that all algebraic bases $\beta$ allow an eventually periodic representations of the elements of $\Q(\beta)$ with a finite alphabet of digits $\A$. Moreover, the classification of bases allowing that those representations have the so-called weak greedy property is given.

The decision problem whether a given pair $(\beta,\A)$ allows eventually periodic representations proves to be rather hard, for it is equivalent to a topological property of the attractor of an iterated function system.
\end{abstract}
\maketitle

\section{Introduction}
The authors of \cite{BMPV} studied the following problem: for which algebraic bases $\beta$, $|\beta|>1$, there is a finite alphabet of digits $\A,$ such that each $x\in\Q(\beta)$ can be expressed as
\[x=\sum_{i=-L}^{+\infty}a_i\beta^{-i},\ a_i\in\A,\quad(a_i)_{i=-L}^{+\infty}\text{ eventually periodic}.\]\
This problem is a generalization of a well known property of Pisot bases. Indeed, K. Schmidt in \cite{Schmidt} proved that the greedy $\beta$-expansions of $\Q(\beta)\cap\R^+$ are eventually periodic. It is worth mentioning that Schmidt conjectured that this holds also for the $\beta$-expansions in Salem bases, however, this has not been proved for a single instance of a Salem base so far.

The problem was partially solved in \cite{BMPV} for a certain subclass of algebraic numbers. In the subsequent paper \cite{KV}, it was proved that all algebraic bases without conjugates on the unit circle allow eventually periodic representations with some alphabet. The proofs relied heavily on the  existence of parallel addition algorithms on $(\beta,\A)$-representations. The drawback of this method is that the parallel algorithms are not available when the base has a conjugate on the unic circle, hence it cannot be used to, for instance, Salem bases. Nevertheless, by generalizing the Fermat's little theorem, the authors in \cite{KV} were able to prove that in any algebraic base and a suitable digit alphabet, the number $\tfrac1n$ have an eventually periodic representation for any $n\in\N.$

In this paper we will solve the problem completely, as we show that every algebraic base $\beta$ allows eventually periodic representations of $\Q(\beta)$ with some finite digit alphabet. We use a rather number theoretical approach. In particular, we consider the embeddings of $\Q(\beta)$ corresponding to all places $\p$ such that $|\beta|_\p>1.$ A similar approach for different problems connected to the theory of number systems was used for example in \cite{ATZ} and \cite{SteThu}.

A related problem is to decide whether a given pair $(\beta,\A)$ allows eventually periodic representations of $\Q(\beta)$. We show that this is related to attractors of certain iterated function systems, as well as to a geometric property of the so-called spectrum of $\beta$ with the alphabet $\A$.
\section{Definitions and main results}
Let the base $\beta\in\C$ be such that $|\beta|>1$ and let $\A\subset\C$ be a finite digit alphabet. By a $(\beta,\A)$-representation of $x\in\C$ we mean the expression of the form 
$x = \sum_{i=-L}^{+\infty} a_i\beta^{-i}.$ A particular representation is said to be eventually periodic if the sequence $(a_i)_{i=-L}^{+\infty}$ is eventually periodic.
The set of numbers admitting an eventually periodic $(\beta,\A)$-representation is denoted $\mathrm{Per}_\A(\beta),$ i.e.
\[
\mathrm{Per}_\A(\beta) = \{x\in\C : x = \sum_{i=-L}^{+\infty} a_i\beta^{-i}, (a_i)_{i=-L}^{+\infty}\text{ eventually periodic}\}.
\]

There are several ways of constructing $(\beta,\A)$-representations, most notable of them being the well-known greedy $\beta$-expansions for real bases $\beta>1$ introduced in \cite{ren}. A rather general concept was given by Thurston \cite{Thurston}. For $V\subset\C$ bounded and $\A\subset\C$ finite, let the condition $\beta V\subseteq \bigcup_{a\in\A} (V+a)$ be satisfied. A $(\beta,\A)$-representation of an element of $V$ can then be then constructed as follows. Define a transformation $T:V\rightarrow V$ by
\begin{equation}\label{eq:thurston}T(x) = \beta x-D(x)\quad\text{ with } D(x)\in\A.\end{equation}
Then $x = D(x)\beta^{-1} + {T(x)}\beta^{-1}$, and by iterating this procedure and denoting $a_i = D(T^{i-1}(x))$ we obtain $x = \sum_{i=1}^{+\infty}a_i\beta^{-i}$. 
Moreover, it can be easily seen that if the sequence $(T^n(x))_{n\geq0}$ takes only finitely many values, then the corresponding $(\beta,\A)$-representation is eventually periodic. We will later prove the following result by generalizing the Thurston's construction.
\begin{theorem}\label{thm:main1}
Let $\beta\in\C$, $|\beta|>1$ be an algebraic number. Then there exists $\A\subset\Z$ finite such that $\mathrm{Per}_\A(\beta)=\Q(\beta)$.
\end{theorem}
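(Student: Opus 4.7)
The plan is to generalize Thurston's construction to handle, simultaneously, all the places of $\Q(\beta)$ where the action of $\beta$ fails to be contracting. Let $S$ be the finite set consisting of the archimedean places $\p$ with $|\sigma_\p(\beta)|\geq 1$, together with the non-archimedean places with $|\beta|_\p>1$, and let $\iota\colon\Q(\beta)\hookrightarrow\K:=\prod_{\p\in S}\Q(\beta)_\p$ denote the diagonal embedding into the product of completions.

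The central step is to construct a bounded set $V=\prod_{\p\in S}V_\p\subset\K$, each $V_\p$ a ball of radius $R_\p$ in the corresponding completion, and a finite alphabet $\A\subset\Z$ satisfying the covering condition $\beta V\subseteq V+\A$: for every $v\in V$ there must exist $a\in\A$ with $|\sigma_\p(\beta)v_\p-a|_\p\leq R_\p$ simultaneously at every $\p\in S$. Granted this, the generalized Thurston transformation $T(v)=\beta v-D(v)$ with $D(v)\in\A$ maps $V$ to itself, and iterating on any $\iota(y)\in V\cap\iota(\Q(\beta))$ yields a $(\beta,\A)$-representation $y=\sum_{i\geq 1}a_i\beta^{-i}$. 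An arbitrary $x\in\Q(\beta)$ is first reduced to this situation by a finite integer preperiod: by approximation one writes $x=\sum_{i=-L}^{-1}a_i\beta^{-i}+y$ with $a_{-L},\dots,a_{-1}\in\A$ and $y\in V\cap\iota(\Q(\beta))$.

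Eventual periodicity of the resulting representation follows from finiteness of the orbit $\{T^n(\iota(y))\}_{n\geq 0}$. It is bounded at every $\p\in S$ by construction. At each place $\p$ not in $S$ the dynamics is non-expanding: for non-archimedean $\p$ one has $|\beta|_\p\leq 1$ and $|a|_\p\leq 1$ for every $a\in\Z$, so the ultrametric inequality bounds $|T^n(\iota(y))|_\p$ by $\max(|y|_\p,1)$; at the remaining archimedean $\p$ one has $|\sigma_\p(\beta)|<1$ strictly, giving a strictly contracting affine recurrence with a uniform bound. Boundedness at every place, combined with bounded denominators inherited from $y$ and the integrality of $\A$, places the orbit inside a bounded subset of the lattice $\tfrac{1}{N}\Oo$ in the Minkowski space $\prod_v\Q(\beta)_v$, hence in a finite set.

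The main obstacle is the construction of $V$ and $\A$ in the presence of an archimedean place $\p\in S$ with $|\sigma_\p(\beta)|=1$ --- precisely the situation for Salem (and more general) bases. At such a place the map $v_\p\mapsto\sigma_\p(\beta)v_\p$ is an isometry, so no contraction is built into $V_\p$; the covering condition forces the chosen digit to lie within $R_\p$ of $\sigma_\p(\beta)v_\p$ in $\Q(\beta)_\p$, which, at a complex embedding, leaves little room in the ``imaginary direction''. Because $\A\subset\Z$ embeds into $\K$ only as a rank-one sublattice of a generally higher-dimensional space, making these constraints compatible simultaneously with those at the expanding places requires a careful, non-naive choice of the balls $V_\p$ exploiting the algebraic relations among the Galois conjugates of $\beta$. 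This compatibility is the essential new ingredient beyond the classical single-place Thurston construction.
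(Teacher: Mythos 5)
Your skeleton matches the paper's: work in $\K_\beta=\prod_{\p\in S_\beta}K_\p$, build an invariant region $V$ with a covering $\beta V\subseteq\bigcup_{a\in\A}(V+a)$, run the Thurston map $T(x)=\beta x-D(x)$, and deduce periodicity from finiteness of the orbit via boundedness at every place (your finiteness argument is essentially the paper's and is sound). But the proposal has a genuine gap at exactly the point you yourself flag as ``the central step'' and ``the essential new ingredient'': you never construct the covering in the presence of an archimedean place with $|\sigma_\p(\beta)|=1$, you only describe why it is hard. The paper's resolution is to \emph{not} demand $\A\subset\Z$ (or even $\A\subset\Q(\beta)$) at the covering stage. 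One first chooses, independently at each place $\p\in S_\beta$, a local digit set $\B_\p$ covering $\beta$ times the local ball with strict overlap (at a place with $|\beta|_\p=1$ this is trivial since $\beta$ acts isometrically there), forms the Cartesian product $\B=\prod_\p\B_\p$, whose elements are generic points of $\K_\beta$, and then invokes the weak approximation theorem ($\Q(\beta)$ is dense in $\K_\beta$, Proposition~\ref{prop:weak}) to perturb each product-digit into an actual element of $\Q(\beta)$ without destroying the covering --- the overlap condition \eqref{eq:cover} is what absorbs the perturbation. This completely dissolves the ``rank-one sublattice'' obstruction you describe; the passage to an integer alphabet is then a separate, final step via a digit-rewriting lemma (Lemma~8 of \cite{BMPV}), not something to be built into the covering.

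There is a second gap in the reduction of an arbitrary $x\in\Q(\beta)$ to the invariant region. With a \emph{fixed} bounded $V$, multiplying by $\beta^{-L}$ does not shrink $|x|_\p$ at a place with $|\beta|_\p=1$, and your alternative --- writing $x=\sum_{i=-L}^{-1}a_i\beta^{-i}+y$ with $y\in V$ ``by approximation'' --- requires the set of finite $\A$-combinations of powers of $\beta$ to be relatively dense at that place, which is precisely condition~(3) of Theorem~\ref{thm:main2} and is not available a priori (indeed it is the hard content of that theorem). The paper avoids this by making the region a family $\D_m$ whose radius $m$ at the isometric places is arbitrary, and by arranging that the \emph{same} alphabet satisfies $\beta\D_m\subseteq\bigcup_{a\in\A}(\D_m+a)$ for every $m\ge1$ (an isometry preserves a covering at any radius, by the triangle inequality); one then simply takes $m$ large enough, depending on $x$, so that $\beta^{-L}x\in\D_m$. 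Without these two ingredients --- the weak-approximation construction of the alphabet and the $m$-uniform family of invariant regions --- the proposal does not yet constitute a proof.
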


Another property of representations studied in~\cite{BMPV} was weak-greediness. In that context, special classes of algebraic numbers turned to be important. An algebraic integer $\beta>1$ is called a Pisot number if its Galois conjugates $\beta'$ satisfy $|\beta'|<1$. If an algebraic integer $\beta>1$ satisfies $|\beta'|\leq1$ with at least one conjugate lying on the unit circle, then $\beta$ is called a Salem number. We call complex Pisot or complex Salem numbers the corresponding complex analogies where $\beta>1$ is replaced by $|\beta|>1$ and where the condition on the Galois conjugates does not hold for the complex conjugation.
\begin{defn}
We say that $(\beta,\A)$ admits weak-greedy eventually periodic representations of $\Q(\beta)$ if there is $c>0$ such that every $x\in\Q(\beta), |x|<c$ allows an eventually periodic $(\beta,\A)$-representation of the form $x=\sum_{i=1}^{+\infty}a_i\beta^{-i}$.
\end{defn}
Weak-greediness means, roughly speaking, existence of a representation whose highest power is proportional to the modulus of the represented number.
It was shown in \cite{BMPV} that if $(\beta,\A)$ admits weak-greedy eventually periodic representations of $\Q(\beta)$, then $\beta$ is either a (complex) Pisot or a (complex) Salem number, or all the conjugates $\beta'$ of $\beta$ outside the unit circle satisfy $|\beta'|=|\beta|$. Furthermore, it was shown that all the (complex) Pisot bases allow weak-greedy eventually periodic representations. We give the full classification in the following theorem.
\begin{theorem}\label{thm:wg}
Given a base $\beta\in\C$, there exists $\A\subset\Q(\beta)$ such that $(\beta,\A)$ admits weak-greedy eventually periodic $(\beta,\A)$-representations of $\Q(\beta)$ if and only if $\beta$ is an algebraic integer without Galois conjugates outside of the unit circle other than itself and its complex conjugate.
\end{theorem}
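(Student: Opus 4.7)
The theorem is an equivalence, and the prior results from \cite{BMPV} recalled in the introduction reduce the workload substantially: weak-greediness already forces $\beta$ to be (complex) Pisot, (complex) Salem, or to have every outer conjugate of modulus exactly $|\beta|$, and the (complex) Pisot case is already realized. My plan is therefore to close the two remaining gaps in the forward direction and establish the (complex) Salem case in the backward direction.

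For the forward direction, I need to rule out non-integer $\beta$ and the existence of a Galois conjugate $\beta'\notin\{\beta,\bar\beta\}$ with $|\beta'|=|\beta|$. If $\beta$ is not an algebraic integer, some finite place $\p$ of $\Q(\beta)$ satisfies $|\beta|_\p>1$; since $\A$ is finite the values $|a|_\p$ for $a\in\A$ are bounded, so every representable $x=\sum_{i\geq1}a_i\beta^{-i}$ has $|x|_\p\leq M$ for some $M=M(\A,\beta)$, whereas weak approximation yields $x\in\Q(\beta)$ with $|x|<c$ and $|x|_\p$ arbitrarily large. In the second case, apply the Galois embedding $\sigma:\beta\mapsto\beta'$ to an eventually periodic representation; since eventually periodic sums are closed-form rational expressions in $\beta$, the identity survives under $\sigma$ and we obtain $\sigma(x)=\sum_{i\geq1}\sigma(a_i)(\beta')^{-i}$ with uniform bound $|\sigma(x)|\leq C$ because $|\beta'|=|\beta|$ and $\sigma(\A)$ is finite. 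Weak approximation now provides $x\in\Q(\beta)$ with $|x|<c$ but $|\sigma(x)|>C$, the desired contradiction.

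For the backward direction, the (complex) Pisot case is covered by \cite{BMPV}, so only the (complex) Salem case is genuinely new. I plan to generalize the Thurston construction \eqref{eq:thurston} by embedding $\Q(\beta)$ diagonally into the product $\prod_v\Q(\beta)_v$ over all archimedean places of $\Q(\beta)$, and constructing a compact set $V=\prod_vV_v$ together with a finite alphabet $\A\subset\Z$ satisfying $\beta V\subseteq\bigcup_{a\in\A}(V+a)$, with $V_v$ at the distinguished place being a disk of radius $c$ about zero (to ensure weak-greediness there). The Thurston transformation $T(x)=\beta x-D(x)$ then preserves $V\cap\Q(\beta)$; because $\beta$ is a unit (Salem numbers being reciprocal), the orbit of any $x\in\Q(\beta)$ is also bounded at every finite place, so by the discreteness of $\Q(\beta)$ in the full adelic product the orbit is finite, yielding eventual periodicity.

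The main technical obstacle lies in verifying the covering condition at the isometric places. At a place $v$ with $|\beta|_v=1$, multiplication by $\beta_v$ is a rotation and does not absorb any translation, yet the digit $a$ chosen to tile $\beta V$ at the distinguished place also acts on $V_v$ by translation; $V_v$ must be constructed so that $y_v-\sigma_v(a)\in V_v$ automatically, despite $a$ being determined by unrelated data at the distinguished place. This simultaneous tiling requirement—rather than anything in the forward direction—is where the Salem hypothesis is essential, and is the step I expect to require the most delicate geometric argument, perhaps via the iterated function system perspective alluded to in the introduction.
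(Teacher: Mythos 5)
Your forward direction is sound and is essentially the paper's argument: at any place $\p\in S_\beta$ with $|\beta|_\p>1$ other than the one given by the identity embedding (a finite place when $\beta$ is not an algebraic integer, an archimedean one coming from an outer conjugate $\beta'\notin\{\beta,\bar\beta\}$), summing the geometric series bounds $|x|_\p$ for every representable $x=\sum_{i\geq1}a_i\beta^{-i}$ by $\max_{a\in\A}|a|_\p/(|\beta|_\p-1)$, and density of $\Q(\beta)$ in $\K_\beta$ produces $y$ with $|y|<c$ violating that bound. The paper runs this uniformly over all such $\p$ and so does not even need the reduction from \cite{BMPV} that you invoke (your case-2 argument in fact works for any $|\beta'|>1$, so the reduction is superfluous).

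The backward direction, however, is not a proof: you explicitly defer the one step that carries all the difficulty, namely the covering $\beta V\subseteq\bigcup_{a\in\A}(V+a)$ at the places with $|\beta|_v=1$. Two choices you make create the obstacle, and both are avoided in the paper. First, you insist on $\A\subset\Z$, which forces the translation at an isometric place $v$ to be the very integer selected by the data at the distinguished place --- that is exactly the coupling you cannot control. The theorem only asks for $\A\subset\Q(\beta)$, and the paper exploits this: it builds the digit set first as a Cartesian product $\B=\prod_{\p\in S_\beta}\B_\p$ of \emph{tuples} in $\K_\beta$, so that the local digit at each place is chosen independently, and only then replaces each tuple by an element of $\Q(\beta)$ via weak approximation (Proposition~\ref{prop:weak}), using a cover with overlap as in \eqref{eq:cover} so the approximation error is harmless. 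Second, a fixed compact $V_v$ at an isometric place cannot contain all of $\Q(\beta)\cap B_c(0)$, since $|x|<c$ puts no bound on the conjugates of $x$ on the unit circle; the paper instead takes $V_v=B_m(0)$ with $m$ depending on $x$, and observes (triangle inequality, since $\beta_vB_m(0)=B_m(0)$) that one fixed $\B_v$ covers $B_m(0)$ by translates of $B_m(0)$ for \emph{every} $m\geq1$, so the alphabet stays finite and independent of $x$. Finally, your diagnosis of where the hypothesis on $\beta$ enters is off: the isometric-place covering is needed for Theorem~\ref{thm:main1} for arbitrary algebraic $\beta$; the Salem/Pisot-type condition is used only to guarantee that the identity is the unique expanding place, so that $x$ itself (rather than $\beta^{-L}x$) lies in $\D_m$ and the representation has no integer part, which is what weak-greediness requires.
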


It is natural to ask whether a given pair $(\beta,\A)$ admits eventually periodic representations of $\Q(\beta)$. We give several equivalent conditions in Theorem~\ref{thm:main2}. Before stating it, we will need to introduce some number theoretical notation.
We will follow the notation from \cite{ATZ} and \cite{SteThu}, although there is a difference in the definition of $S_\beta$ (because we will not be working only with expansive numbers).

Let $\beta$ be an algebraic number, and denote $K = \Q(\beta)$ with the ring of integers $\Oo_K$. Let $\beta\Oo_K = \frac{\mathfrak a}{\mathfrak b}$ with $\mathfrak a, \mathfrak b$ being coprime ideals in $\Oo_K$. Define a set of places of $K$
\[S_\beta = \{\p : \p\ |\ \infty \text{ and } |\beta|_\p\geq 1 \}\cup\{\p : \p\ |\ \mathfrak b\}.\]
Furthermore, let $\K_\beta = \prod_{\p\in S_\beta} K_\p$ where
$K_\p$ denotes the completion of $K$ with respect to the $\p$-adic norm. The space $\K_\beta$ is endowed with the norm
$|x|_\beta = \max\{|x|_\p : \p\in S_\beta\}$ and with the respective topology.

When we speak about elements of $\Q(\beta)$ in $\K_\beta$, we mean their images through the diagonal embedding 
\[\Phi_\beta: \Q(\beta)\rightarrow \K_\beta,\quad x\mapsto \prod_{\p\in S_\beta}x.\]
When no confusion is expected, the symbol $\Phi_\beta$ will be ommited. 
Of course, a generic point of $\K_\beta$ does not correspond
to any element of $\Q(\beta)$. Nevertheless, an approximation of $\K_\beta$ by $\Q(\beta)$ is possible. The following proposition is a direct application of the well known weak approximation theorem (see for instance Theorem 3.4 of \cite{Neu}).
\begin{proposition}\label{prop:weak}
$\Q(\beta)$ is dense in $\K_\beta$. In other words, for any $\varepsilon>0$ and any $x\in\K_\beta$  there is $z\in\Q(\beta)$ such that $|x-\Phi_\beta(z)|_\beta <\varepsilon.$
\end{proposition}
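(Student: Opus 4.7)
The plan is to reduce the statement to a direct invocation of the weak approximation theorem, since $\K_\beta$ is a finite product of completions of $K=\Q(\beta)$ and $S_\beta$ is a finite set of pairwise distinct (hence pairwise inequivalent) places of $K$. First I fix a target point $x=(x_\p)_{\p\in S_\beta}\in\K_\beta$ and $\varepsilon>0$, and note that the norm on $\K_\beta$ is the max over the finitely many coordinates, so it suffices to find a single $z\in\Q(\beta)$ with $|x_\p-z|_\p<\varepsilon$ simultaneously for every $\p\in S_\beta$.

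The approximation proceeds in two steps. Since each $K_\p$ is by construction the completion of $K$ with respect to $|\cdot|_\p$, the field $K$ is dense in $K_\p$ for every individual $\p$; hence for each $\p\in S_\beta$ I can choose $y_\p\in K$ with $|x_\p-y_\p|_\p<\varepsilon/2$. This step on its own is insufficient because the chosen $y_\p$'s are in general distinct elements of $K$, whereas density in the product demands a single common element. This is exactly the gap filled by the weak approximation theorem: applied to the finite family of pairwise inequivalent nontrivial absolute values $\{|\cdot|_\p:\p\in S_\beta\}$ on $K$, with targets $y_\p$ and tolerance $\varepsilon/2$, it produces a single $z\in K=\Q(\beta)$ satisfying $|z-y_\p|_\p<\varepsilon/2$ for all $\p\in S_\beta$.

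A single application of the triangle inequality in each completion then gives $|x_\p-z|_\p\le|x_\p-y_\p|_\p+|y_\p-z|_\p<\varepsilon$ for every $\p\in S_\beta$, and taking the maximum yields $|x-\Phi_\beta(z)|_\beta<\varepsilon$.

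There is no substantive obstacle: the entire argument is bookkeeping on top of standard results. The only points that deserve a brief check are that the places in $S_\beta$ are genuinely pairwise inequivalent (immediate from the definition of a place as an equivalence class of absolute values) and that archimedean and non-archimedean places can be treated uniformly in a single approximation, which is exactly the form of the weak approximation theorem cited as Theorem 3.4 of \cite{Neu}. Hence the proof should consist of little more than stating these two steps and citing the theorem.
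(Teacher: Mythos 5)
Your proof is correct and follows exactly the route the paper intends: the paper gives no separate argument, stating only that the proposition is a direct application of the weak approximation theorem (Theorem 3.4 of \cite{Neu}), and your two-step reduction (coordinatewise density of $K$ in each $K_\p$, then weak approximation to merge the approximants into a single $z$) is the standard way to spell that out.
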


By the Hutchinson's theorem on iterated function systems (see~\cite{hutchinson}), there exists a unique non-empty compact set $K(\beta,\A)\subset\K_\beta$ satisfying 
\[K(\beta,\A) = \beta^{-1}\left(\bigcup_{a\in\A}K(\beta,\A)+a\right).\]
The iterated funtion system consists of the contracting maps $x\mapsto\beta^{-1}(x + a)$ on the complete metric space $\K_\beta$.
The set $K(\beta,\A)$ is usually called the attractor of the iterated function system
We see that the attractor can be alternatively described as
\[
K(\beta,\A) = \left\{\sum_{i=1}^{+\infty}\Phi_\beta(a_i\beta^{-i}) : a_i\in\A\right\}.
\]
Note that the compactness of $K(\beta,\A)$ can be then alternatively proved, as in~\cite{SteThu}, by $K(\beta,\A)$ being a continuous image of the compact space of infinite words 
$\A^\N= \{a_1a_2a_3\dots : a_i\in\A\}.$

Another notion we need to introduce is the spectrum of $\beta$ with the alphabet $\A$ as introduced by Erd\H{o}s, J\'oo, and Komornik in~\cite{ejk90}. Note that the original definition was given for $1<\beta<2$, $\A=\{0,1\}$ only.

Let $\beta\in\C$, $|\beta|>1$ and let $\A\subset\C$ be finite. We set
\[X^\A(\beta) = \left\{\sum_{i=0}^{n}a_i\beta^i : n\in\N, a_i\in\A\right\}.\] 
Many authors contributed to the study of $X^\A(\beta)$, namely to the following two properties.
We say that a set $X\subset \mathbb K$ is:
\begin{enumerate}
  \item uniformly discrete, if $0$ is not an accumulation point of $X-X$;
  \item relatively dense, if there exists $R>0$ such that for every $z\in\K$ we have $B_R(z)\cap X\neq \emptyset.$
\end{enumerate}
If both conditions are satisfied, then $X$ is said to be a Delone set. 
The question when is the spectrum of a real $\beta>1$ with an integer alphabet $\{0,1,\dots,m\}$ a Delone set in $\R^+$ was completely solved recently in~\cite{feng}.
Finer results on the structure of gaps of $X^\A(\beta)$, their lengths or frequencies, were given for instance in~\cite{bugeaud,hare,mapape}.
Spectra of complex bases $\beta$ with integer alphabet were considered in~\cite{zaimi,hepe}.

Our result stated as Theorem~\ref{thm:main2} puts into relation the relative density of the spectrum $X^\A(\beta)$, the attractor $K(\beta,\A)$, and the possibility of periodic $(\beta,\A)$-representations of $\Q(\beta)$ and $\Z[\beta]$.

\begin{theorem}\label{thm:main2}
Let $\beta$ be an algebraic number without conjugates on the unit circle, and let $\A\subset\Q(\beta)$ be finite. The following statements are equivalent.
\begin{enumerate}
  \item $\Q(\beta)\subseteq\Per_\A(\beta)$;
  \item $\Z[\beta]\subseteq\Per_\A(\beta)$;
  \item The spectrum $X^\A(\beta)$ is relatively dense in $\mathbb K_\beta$.
  \item $0\in\operatorname{int}(K(\beta,\A))$ in $\K_\beta$.
\end{enumerate}
\end{theorem}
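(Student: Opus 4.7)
The plan is to prove the four conditions equivalent in the cycle $(1)\Rightarrow(2)\Rightarrow(3)\Rightarrow(4)\Rightarrow(1)$; $(1)\Rightarrow(2)$ is immediate since $\Z[\beta]\subseteq\Q(\beta)$.

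For $(4)\Rightarrow(1)$ I apply Thurston's construction with $V=K(\beta,\A)$: the inclusion $\beta K\subseteq\bigcup_{a\in\A}(K+a)$ defines a transformation $T:K\to K$ of the form $T(x)=\beta x-D(x)$ with $D(x)\in\A$. The no-unit-circle hypothesis gives $|\beta^{-1}|_\beta<1$, so $\beta^{-L}z\in B_r(0)\subseteq K$ for every $z\in\Q(\beta)$ once $L$ is large enough. Iterating $T$ on $\beta^{-L}z$ yields an orbit $(x_k)$ in $K\cap\Q(\beta)$, and eventual periodicity reduces to showing that the orbit is finite. Compactness of $K$ bounds the orbit at the places of $S_\beta$; at archimedean $\p\notin S_\beta$ the inequality $|\beta|_\p<1$ makes the recursion $x_{k+1}=\beta x_k-a$ uniformly bounded; at finite $\p\notin S_\beta$ the ultrametric inequality $|x_{k+1}|_\p\leq\max(|x_k|_\p,|a|_\p)$ gives uniform bounds. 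Combined with $x_k\in\tfrac{1}{d}\Z[\beta]$ for a fixed denominator $d$ (determined by $\A$ and the finite places in $S_\beta$), the lattice discreteness of $\tfrac{1}{d}\Z[\beta]$ in $\prod_{\p\mid\infty}K_\p$ then forces only finitely many orbit values.

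For $(2)\Rightarrow(3)$, any eventually periodic representation of $y\in\Z[\beta]$ splits as its integer part $\sum_{i\leq 0}a_i\beta^{-i}\in X^\A(\beta)$ plus its fractional tail $\sum_{i\geq 1}a_i\beta^{-i}\in K(\beta,\A)$, so $\Z[\beta]\subseteq X^\A(\beta)+K(\beta,\A)$. Under our hypothesis, $\Z[\beta]$ is itself relatively dense in $\K_\beta$: its archimedean projection is either a full-rank lattice or an irrational dense subgroup, and at each $\p\in S_\beta$ with $\p\mid\mathfrak{b}$ the projection is covered by combining $\Z$-density in the $\Z_\p$-part with powers of $\beta$ filling the unbounded direction. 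Since $K(\beta,\A)$ is compact, relative density of $X^\A(\beta)$ follows from that of $X^\A(\beta)+K(\beta,\A)$, which inherits relative density from its subset $\Z[\beta]$.

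The main technical implication is $(3)\Rightarrow(4)$. I would pass to the higher-power base: fix a large integer $n_0$ and take the enlarged alphabet $\A'=\{\sum_{i=0}^{n_0-1}a_i\beta^i : a_i\in\A\}$. Two facts bridge the two bases: first, $X^{\A'}(\beta^{n_0})=X^\A(\beta)$ and the attractor of the IFS for $(\beta^{n_0},\A')$ coincides with $K(\beta,\A)$; second, every eventually periodic $(\beta^{n_0},\A')$-expansion reindexes canonically into an eventually periodic $(\beta,\A)$-expansion. It therefore suffices to run the classical Thurston construction in base $\beta^{n_0}$ with alphabet $\A'$ on $V=\overline{B_R(0)}$, which requires $\A'$ to be $R$-dense in $B_{|\beta|_\beta^{n_0}R}(0)$; this delivers $(\beta^{n_0},\A')$-expansions of every $z\in V$ whose partial sums lie in $K(\beta,\A)$, so that $B_R(0)\subseteq K(\beta,\A)$. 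The main obstacle is therefore to extract this scale-wise density of $\A'$ from the given global density of $X^\A(\beta)$; I expect this to reduce, via a degree bound showing that any $s\in X^\A(\beta)$ of bounded norm admits a representation of degree logarithmic in its norm (using the expanding property of $\beta$ at every place of $S_\beta$), to choosing $n_0$ so large that every $R$-approximator of a point of $B_{|\beta|_\beta^{n_0}R}(0)$ already lies in $\A'$.
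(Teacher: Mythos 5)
Your cycle $(1)\Rightarrow(2)\Rightarrow(3)\Rightarrow(4)\Rightarrow(1)$ matches the paper's, and three of the four arrows are essentially sound: $(1)\Rightarrow(2)$ is trivial; $(2)\Rightarrow(3)$ is the paper's argument (split off the integer part, use relative density of $\Z[\beta]$ in $\K_\beta$ --- though this last fact deserves a real proof, as in Lemma~\ref{lem1}, rather than your sketch); and $(4)\Rightarrow(1)$ is the paper's Thurston iteration, except that your final appeal to ``lattice discreteness of $\tfrac1d\Z[\beta]$ in $\prod_{\p\mid\infty}K_\p$'' is imprecise: when $\beta$ is not an algebraic integer, $\tfrac1d\Z[\beta]$ is \emph{not} discrete in the archimedean completions (e.g.\ $\Z[3/2]=\Z[1/2]$ is dense in $\R$). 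What saves you is that you have already bounded the orbit at \emph{every} place, finite and infinite, and a subset of $\Q(\beta)$ with uniformly bounded absolute values at all places (and $|\cdot|_\p=1$ at almost all) is finite; that is exactly how the paper concludes.

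The genuine gap is $(3)\Rightarrow(4)$, and you flag it yourself: the whole implication rests on the unproved ``scale-wise density'' of $\A'=\{\sum_{i<n_0}a_i\beta^i\}$, i.e.\ on a bound showing every $s\in X^\A(\beta)$ lying in $\beta^{n_0}\overline{B_R(0)}+B_{r_C}(0)$ admits a representation of degree $<n_0$. This is not a routine reduction. First, nothing rules out small elements of the spectrum that are only realized by high-degree polynomials with heavy cancellation. Second, even the no-cancellation heuristic is quantitatively on the wrong side: an approximant $s$ of a point of $\beta^{n_0}\overline{B_R(0)}$ satisfies $|s|_\p\lesssim|\beta|_\p^{n_0}R$ at each $\p\in S_\beta$, which only forces degree $\lesssim n_0+\log R/\log|\beta|_\p$; when $S_\beta$ contains a place with $|\beta|_\p$ close to $1$ this exceeds $n_0$, so $\A'$ need not contain the approximant no matter how large you take $n_0$. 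The paper avoids all of this by a different mechanism: for $z\in B_1(0)$ it approximates $\beta^nz$ (not $z$) by $x_n\in X^\A(\beta)$ within the fixed radius $r_C$, sets $y_n=\beta^{-n}x_n\to z$, and then controls the degree \emph{only of these particular approximants}: writing $z=\inp(y_n)+\frp(y_n)+\varepsilon(n)$, the integer parts $\inp(y_n)\in X^\A(\beta)$ are bounded in $\K_\beta$, hence by uniform discreteness (Lemma~\ref{lem1}(2)) take only finitely many values, so the leading exponent $L(n)$ is bounded by some $L$ independent of $n$ and $z$. Then $\beta^{-L-1}y_n\in K(\beta,\A)$ for all $n$, and compactness (closedness) of the attractor gives $\beta^{-L-1}z\in K(\beta,\A)$, i.e.\ $\beta^{-L-1}B_1(0)\subseteq K(\beta,\A)$. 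You should replace your passage to the base $\beta^{n_0}$ by this uniform-discreteness-plus-compactness argument; as written, the central implication of the theorem is not established.
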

The strength of Theorem~\ref{thm:main2} is that is connects objects that were already studied in the literature.
A special case of the equivalence of (3) and (4) in case $\K_\beta = \C$ was stated in~\cite{HMV}. 
Tiling properties of the attractors $K(\beta,\A)$ with $\beta$ expanding and with specific digit alphabets were studied in~\cite{SteThu}. 


\section{Proofs of the main results}
Before proving Theorem~\ref{thm:main1}, we prove the following lemma.
\begin{lemma}\label{lem1}
Let $\beta$ be an algebraic number. Then
\begin{enumerate}
 \item $\Z[\beta]$ is relatively dense in $\K_\beta$;
 \item $X^\A(\beta)$ is uniformly discrete in $\K_\beta$ for any $\A\subset\Q(\beta)$ finite.
\end{enumerate}
\end{lemma}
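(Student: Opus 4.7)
The plan is to handle the two parts with rather different techniques.

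For part~(2), the uniform discreteness of $X^\A(\beta)$ should follow directly from the product formula on $K=\Q(\beta)$. Given distinct $x,y\in X^\A(\beta)$, the difference $z=x-y$ is a nonzero element of the form $\sum_{i=0}^n c_i\beta^i$ with $c_i\in\A-\A$, a fixed finite subset of $K$. At every place $v\notin S_\beta$ one has $|\beta|_v<1$ (archimedean) or $|\beta|_v\le 1$ (non-archimedean), giving a uniform upper bound on $|z|_v$: the archimedean bound comes from summing the geometric series $\sum_i|\beta|_v^i$, and the non-archimedean bound from the ultrametric inequality. For all but finitely many non-archimedean $v$ the alphabet $\A-\A$ moreover lies in $\Oo_v$, yielding $|z|_v\le 1$. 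Hence, with the normalized absolute values $\|\cdot\|_v=|\cdot|_v^{[K_v:\Q_v]}$, the product $\prod_{v\notin S_\beta}\|z\|_v$ is bounded above by a constant $C$ depending only on $\beta$ and $\A$. Combining with the product formula $\prod_v\|z\|_v=1$ forces $\prod_{v\in S_\beta}\|z\|_v\ge 1/C$, whence $|z|_\beta=\max_{v\in S_\beta}|z|_v$ is bounded below by a positive constant.

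For part~(1), relative density of $\Z[\beta]$ is more delicate. The inclusion $\Z[\beta]\subseteq\Oo_{K,S_\beta}$ is immediate, since a $\Z$-polynomial in $\beta$ has non-negative $\p$-adic valuation at every non-archimedean $\p\notin S_\beta$, where $v_\p(\beta)\ge 0$ by the definition of $\mathfrak b$. The harder direction is to approximate arbitrary elements of $\K_\beta$ from within $\Z[\beta]$. I would first pick a positive integer $N$ with $N\Oo_K\subseteq\Z[\beta]$; such $N$ exists because a $\Z$-basis of $\Oo_K$, expressed in the $\Q$-basis $(1,\beta,\dots,\beta^{d-1})$ of $K$, has coefficients of bounded denominator. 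The lattice $N\Oo_K$ is relatively dense at the archimedean places in $S_\beta$ and bounded (contained in $N\Oo_\p$) at the non-archimedean ones, while the powers $\beta^n\in\Z[\beta]$ achieve $|\beta|_\p^n\to\infty$ at each $\p\mid\mathfrak b$.

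Given a target $z\in\K_\beta$, the plan is first to adjust $z$ at the non-archimedean places of $S_\beta$ by adding a suitable combination of powers of $\beta$ together with $\Z$-multiples thereof, so that the difference has bounded non-archimedean valuation, and then to correct the remaining archimedean discrepancy by a translate from $N\Oo_K$; since such a translate is bounded non-archimedeanly, it does not disturb the non-archimedean approximation already achieved. The crux of the argument will be establishing a uniform bound in the first step: this amounts to showing that the closure of $\Z[\beta]$ in each completion $K_\p$ with $\p\mid\mathfrak b$ is all of $K_\p$, which should follow from $v_\p(\beta)<0$ together with density of $\Z$ in $\Z_p$ and suitable polynomial identities that descend from $\beta^n$ to elements of every prescribed $\p$-adic valuation. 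Coupling this with weak approximation (Proposition~\ref{prop:weak}) then yields relative density of $\Z[\beta]$ in $\K_\beta$.
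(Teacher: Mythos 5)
Your part (2) is correct and is essentially the paper's own argument: you bound $\prod_{v\notin S_\beta}|x-y|_v$ from above uniformly over the nonzero elements of $X^{\A-\A}(\beta)$ (geometric series at the archimedean places with $|\beta|_v<1$, ultrametric inequality at the finite places, triviality at almost all of them) and then invoke the product formula to bound $\max_{v\in S_\beta}|x-y|_v$ from below. Nothing to add there.

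Part (1) is where the gap lies. The paper disposes of it by quoting Lemmas 3.1 and 3.2 of \cite{SteThu}: $\Z[\beta]$ has finite index in the ring of $\widetilde S_\beta$-integers, which is a Delone (cocompact) lattice in $\widetilde\K_\beta$, and one then projects to $\K_\beta$. You instead attempt a self-contained argument whose architecture (first correct the finite places of $S_\beta$ using $\Z[\beta]$, then the archimedean ones using a sublattice $N\Oo_K\subseteq\Z[\beta]$, which is non-archimedeanly bounded and so does not spoil the first step) is sound. But the step you yourself call the crux --- that the closure of $\Z[\beta]$ in $K_\p$ equals $K_\p$ for $\p\mid\mathfrak b$ --- is only asserted to ``follow from'' $v_\p(\beta)<0$ together with density of $\Z$ in $\Z_p$, and those two ingredients alone do not suffice. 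The essential missing input is that $\beta$ generates $K$ over $\Q$, so that the powers of $\beta$ span $K_\p$ over $\Q_p$; without this the claim is simply false, as the closure would sit inside a proper closed subfield $\Q_p(\beta)\subsetneq K_\p$. With it, the claim does follow: $M_n=\Z_p+\Z_p\beta+\cdots+\Z_p\beta^n$ is a full $\Z_p$-lattice in $K_\p$ for $n\ge[K_\p:\Q_p]-1$, hence contains some $p^k\Oo_\p$, and since $M_{n+j}\supseteq\beta^jM_n$ with $|\beta|_\p>1$ one gets $\bigcup_nM_n\supseteq\bigcup_jp^k\beta^j\Oo_\p=K_\p$. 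Two further points need attention: (i) relative density in $\K_\beta$ requires \emph{simultaneous} approximation at all $\p\mid\mathfrak b$ by a single element of $\Z[\beta]$, not place-by-place density; this is repaired by running the same lattice argument in $\prod_{\p\mid\mathfrak b,\,\p\mid p}K_\p$, a quotient of $K\otimes_\Q\Q_p$ that is again generated by $\beta$ over $\Q_p$, and by the Chinese remainder theorem across the distinct rational primes below $\mathfrak b$; and (ii) your closing appeal to Proposition~\ref{prop:weak} is misplaced --- weak approximation gives density of $\Q(\beta)$ in $\K_\beta$, not any statement about $\Z[\beta]$, and it is not needed once the above is carried out.
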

\begin{proof}
According to Lemma 3.2. of \cite{SteThu}, $\Z[\beta]$ has a finite index in the set
$$\Oo_{\widetilde S_\beta} = \{x\in\Q(\beta):|x|_\p\leq1\text{ for all }\p\notin \widetilde S_\beta\}$$
with $\widetilde{S}_\beta = \{\p : \p | \infty \text{ or } |\beta|_\p > 1\}$. Moreover (Lemma 3.1. ibid.), $\Oo_{\widetilde S_\beta}$ is Delone in
$\widetilde\K_\beta=\prod_{\p\in\widetilde S_\beta}K_\p.$
The relatively dense set $\Oo_{\widetilde S_\beta}$ in $\widetilde\K_\beta$ is also relatively dense in $\K_\beta$, because in $\K_\beta$ it
can be perceived through the projection
\[\widetilde\K_\beta\rightarrow \K_\beta:\quad (x_\p)_{\p\in \widetilde S_\beta}\rightarrow (x_\p)_{\p\in {S}_\beta}\]
with ${S}_\beta \subseteq \widetilde S_\beta$.

For the uniform discretness of $X^\A(\beta)$ we show that the origin is not an accumulation point of $X^\A(\beta)-X^\A(\beta)=X^{\A-\A}(\beta)$ in $\K_\beta$. 
From the $\p$-adic product formula we have that
$$\prod_\p |z|_\p = \prod_{\p\in S_\beta} |z|_\p \prod_{\p\notin S_\beta} |z|_\p=1.$$
Notice that for any point of $X^{\A-\A}(\beta)$ the product over $\p\notin S_\beta$ is bounded from above by a constant dependent on $\beta$ and $\A$.  Thus the product over $\p\in S_\beta$ cannot tend to zero, implying that the origin of $\K_\beta$ is not an accumulation point of $X^{\A-\A}(\beta)$.
\end{proof}

\begin{proof}[Proof of Theorem~\ref{thm:main1}]
Consider the set
$$
\D_m=
\prod_{\substack{\p|\infty\\ |\beta|_\mathfrak{p}>1}}{B_1(0)}\times 
\prod_{\substack{\p|\infty\\ |\beta|_\p=1}}{B_m(0)}\times 
\prod_{\substack{\p\nmid \infty \\ |\beta|_\p>1}}\Oo_\p\subset\K_\beta.
$$
First we show that there is $\A\subset\Q(\beta)$, such that $\beta\D_m\subseteq\bigcup_{a\in\A}\D_m + a$  holds for any $m\geq1.$ 

Set $m=1$, then we clearly have
$$
\beta \D_1=
\prod_{\substack{\p|\infty\\ |\beta|_\mathfrak{p}>1}}{B_{|\beta|_\p}(0)}\times 
\prod_{\substack{\p|\infty\\ |\beta|_\p=1}}{B_1(0)}\times 
\prod_{\substack{\p\nmid \infty \\ |\beta|_\p>1}}\p^{\nu_\p(\beta)},
$$
where $\nu_\p(x)$ is the valuation function.
For the archimedean places it is obvious that we can find $\B_\p$, such that $B_{|\beta|_\p}(0)\subseteq \bigcup_{b\in\mathcal B_\p} B_1(0) + b$ holds. For the non-archimedean places even a stronger property holds: 
$\p^{\nu_\p(\beta)} = \bigcup_{b\in\mathcal B_\p} \Oo_\p + b.$
Then a cover of $\beta\mathcal D_1$ can be constructed through the cartesian product
\[\beta\mathcal D_1\subseteq\bigcup_{b\in\mathcal B}\mathcal D_1 + b\quad\text{ with } \mathcal B = \prod_{\p\in S_\beta} \mathcal B_\p\subset\K_\beta.\]

For $m>1$ and any archimedean place $\p$ such that $|\beta|_\p=1$ we use the following. If $B_1(0)\subseteq\bigcup_{b\in\mathcal B_\p}B_1(0)+b$ holds, then it also holds that $B_m(0)\subseteq\bigcup_{b\in\mathcal B_\p}B_m(0)+b$ (with the same $\mathcal B_\p$). This is a simple consequence of the triangle inequality. Hence the construction above yields $\beta\mathcal D_m \subseteq\bigcup_{b\in\mathcal B}\mathcal D_m+b$ with $\mathcal B$ independent of $m.$

Now we apply Proposition~\ref{prop:weak} to obtain an alphabet $\A\subset\Q(\beta)$. To make use of the approximation theorem, we need a cover with an   ``overlap'', in other words
\begin{equation}\label{eq:cover}\mathcal D_m+\delta\subseteq\bigcup_{b\in\mathcal B}\mathcal D_m+b\quad\text{ for any }|\delta|_\beta<\varepsilon.\end{equation}
Indeed, the condition \eqref{eq:cover} holds for the follwing reasons. For the archimedean places, it is apparent from the construction. For the finite places it is also trivial that 
$B_r(x) = B_r(x)+\delta$ for $\delta$ sufficiently small. Therefore the overlapping condition holds in each embedding, and consequently also in~\eqref{eq:cover}. Applying Proposition~\ref{prop:weak}, this concludes that there is $\A\subseteq\Q(\beta)$ such that 
\[\beta\D_m\subseteq\bigcup_{a\in\A}\D_m + a\quad\text{ holds for any }m\geq1.\]

Now we show how an eventually periodic $(\beta,\A)$-representation is obtained. Fix $m\geq1$. For $x\in\D_m$ we define 
\[T(x)=\beta x- D(x): \D_m\rightarrow \D_m\quad\text{ with } D(x)\in\A.\]
Defined in such way, we directly obtain that  the sequence $(|T^n(x)|_\p)_{n\in\N}$ is bounded for any $\p\in S_\beta$. Moreover, for the infinite places not belonging to $S_\beta$ we have that
$|T^n(x)|_\p=|\beta T^{n-1}(x)-a|_\p<Const(\beta,\A)$ eventually, because $|\beta|_\p<1$. For the finite places not belonging to $S_\beta$ we use the strong triangle inequality to obtain
$|T^n(x)|_\p<Const(x,\beta,\A)$ eventually. For all places not contained in $\beta,x,\A$ (these are all but finitely many places) we have $|T^n(x)|_\p=1$. Altogether $(T^n(x))_{n\in\N}$ is finite. This shows that $x\in\mathcal D_m$ admits an eventually periodic $(\beta,\A)$-representation $x=\sum_{i=1}^{+\infty}a_i\beta^{-i}$ with $a_i = D(T^{i-1}(x)).$

Given $x\in\Q(\beta),$ we can find $L\in\N$ such that $\beta^{-L}x\in \mathcal D_m$ with
\[m=\varepsilon+\max\{|x|_\p : |\beta|_\p = 1, \p\mid\infty\}.\]
Hence every $x$ has an eventually periodic $(\beta,\A)$-representation $\sum_{i=1}^{+\infty}a_i\beta^{-i+L}.$
The existence of an integer alphabet then follows from Lemma~8 of \cite{BMPV}.
\end{proof}
\begin{proof}[Proof of Theorem~\ref{thm:wg}]
Fix $\p\in S_\beta$ such that $|\beta|_\p>1$ and not corresponding to the identical embedding. For any $x$ with an eventually periodic $(\beta,\A)$-representation $x=\sum_{i=1}^{+\infty}a_i\beta^{-i}$ we have
$|x|_\p \leq \frac{\max\{|a|_\p :\ a\in\A\}}{|\beta|_\p-1}=:C(\beta,\A,\p)$ by summing the geometric series.
Since $\Q(\beta)$ is dense in $\K_\beta$, for any $c>0$, one can find $y\in\Q(\beta)$ such that $|y|_\p>C(\beta,\A,\p)$, and $|y|<c.$ Thus $y$ cannot have an eventually periodic $(\beta,\A)$-representation.

For the other direction, let $\beta$ have only one place such that $|\beta|_\p>1$.
Then following the proof of Theorem~\ref{thm:main1}, each $x\in\Q(\beta)\cap B_1(0)$ is contained in
\[\D_m = B_1(0)\times\prod_{\substack{\p|\infty\\ |\beta|_\p=1}}{B_m(0)},\quad m=\varepsilon+\max\{|x|_\p : |\beta|_\p = 1, \p\mid\infty\},\]
i.e. $x=\sum_{i=1}^{+\infty}a_i\beta^{-i}.$
%
%
\end{proof}
\begin{proof}[Proof of Theorem~\ref{thm:main2}]
The implication $1)\implies2)$ is trivial.

$2)\implies3):$ For a fixed representation $x=\sum_{i=-k}^{+\infty}a_i\beta^{-i}$ define the integral and the fractional part as

\[
\mathrm{inp}(x) = \sum_{i=-k}^{0}a_i\beta^{-i}\quad\text{ and }\quad \mathrm{frp}(x) = \sum_{i=1}^{+\infty}a_i\beta^{-i}
\]
respectively.
For any $z \in \Z[\beta]$ and any $\p\in S_\beta$ we then have an estimation for an eventually periodic representation of $z$
\[|z - \mathrm{inp}(z)|_\p = |\mathrm{frp}(z)|_\p \leq C(\beta,\A,\p).\]
Since $\mathrm{inp}(z)\in X^\A(\beta)$, we obtained that $X^\A(\beta)$ is relatively dense in $\Z[\beta]$ which is relatively dense in $\K_\beta$ by Lemma~\ref{lem1}. The statement then follows.

$3)\implies 4):$
Fix $z\in B_1(0)\subset \K_\beta$. From the relative density of $X^\A(\beta)$ we have that there are $x_n\in X^\A(\beta)$ such that 
$|\beta^n z-x_n|_\beta<r_C$ holds for all $n\in\N.$ Then $|z-\beta^{-n}x_n|_\beta<|\beta|_\beta^{-n} r_C\rightarrow 0.$
For each $n\in\N$, we obtained $(\beta,\A)$-representations of $y_n : = \beta^{-n}x_n$ as $y_n = \sum_{i=-L(n)}^{+\infty}b_i(n)\beta^{-i}$ with only finitely many nonzero digits $b_i(n).$ Moreover, 
$
z = \inp(y_n) + \frp(y_n) + \varepsilon(n)
$
with $|z|_\beta,|\frp(y_n)|_\beta,|\varepsilon(n)|_\beta$ being bounded by a constant independent of $n$. Thus $|\inp(y_n)|_\beta$ is also bounded and can take only finitely many values because of the uniform discretness of $X^\A(\beta)$. Consequently, we can write $y_n = \sum_{i=-L}^{+\infty}b_i(n)\beta^{-i}$ with $L$ being independent of chosen $z\in B_1(0)$. Clearly, the sequence $(\beta^{-L-1}y_n)_{n\in\N}\subset K(\beta,\A)$ converges to $\beta^{-L-1}z$ which belongs to $K(\beta,\A)$, because $K(\beta,\A)$ is compact. To conclude, $\beta^{-L-1}B_1(0)$ is contained in $\K_\beta.$

$4)\implies 1):$ Let $z\in\Q(\beta)$. Then for some $L\in\N$ we have that $\beta^{-L}z\in K(\beta,\A)$, i.e. $\beta^{-L}z = \sum_{i=1}^{+\infty} a_i\beta^{-i}$. Define $z_i := \beta z_{i-1} - a_i$ with $z_0 = \beta^{-L}z$ (cf. \eqref{eq:thurston}). We have that $z_n\in K(\beta,\A)$ for all $n\in\N,$ hence $(|z_n|_\p)_{n\in\N}$ is bounded for each $\p\in S_\beta.$ For the $\p\notin S_\beta$, the sequence $(|z_n|_\p)_{n\in\N}$ is bounded because $|\beta|_\p\leq1,$
and from the (strong) triangle inequality. Moreover, for almost every $\p$ we have that $|z_n|_\p=1$ for each $n\in\N$.
We conclude that $(z_n)_{n\in\N}$ takes finitely many values. The possibility of choosing an eventually periodic representation then follows from Thurston's construction \eqref{eq:thurston}.
\end{proof}
\section{Comments}
Let us conclude the paper with some comments and open questions.
\begin{enumerate}
\item Motivated by language theorerical problems, J. \v S\'ima and P. Savick\'y studied in~\cite{sisa} the so called quasi-periodic $\beta$-expansions. In a yet unpublished subsequent work they showed that the Salem root of $x^4-x^3-x^2-x+1$ allows eventually periodic $(\beta,\A)$-representations with the alphabet $\{-2,-1,0,1,2\}.$
\item The alphabet arising from the proof of Theorem~\ref{thm:main1} is ``unnecessarily'' large. Assume that $\beta$ is a complex Pisot number, i.e. $\K_\beta=\C.$ The authors of \cite{BrFrPeSv} showed in the proof of their Theorem~4.4 that $0\in\mathrm{int}(K(\beta,\A))$ for $\A=\{-M,\dots,M\}$ with $2M+1>\beta\overline\beta + |\beta+\overline\beta|$. How tight is this bound?
\item Is there a version of Theorem~\ref{thm:main2} for bases with a conjugate on the unit circle? Obviously, the last equivalence needs to be omitted in this case, for the maps $x\mapsto \beta^{-1}(x+a)$ are not contractions on $\K_\beta$ anymore.
\item Can eventually periodic representations be generated by some kind of ``simple'' dynamic system? For example, similarly to the case of the greedy expansions and shift-radix-systems acting on $\Z^d$, see~\cite{srs}.

\end{enumerate}
\section*{Acknowledgements}
The author is thankful to V. Kala and Z. Mas\'akov\'a for many useful comments. This work has been supported by Czech Science Foundation GA\v CR, grant 17-04703Y, and by Charles University Research Centre program No. UNCE/SCI/022.

\newpage

\bibliography{Biblio}

\providecommand{\MR}{\relax\ifhmode\unskip\space\fi MR }
\providecommand{\MRhref}[2]{%
  \href{http://www.ams.org/mathscinet-getitem?mr=#1}{#2}
}
\providecommand{\href}[2]{#2}
\begin{thebibliography}{10}

\bibitem{srs}
S.~Akiyama, T.~Borb\'{e}ly, H.~Brunotte, A.~Peth\H{o}, and J.~M. Thuswaldner,
  \textsl{Generalized radix representations and dynamical systems. {I}}, Acta
  Mathematica Hungarica \textbf{108} (2005), no.~3, 207--238. \MR{2162561}

\bibitem{ATZ}
Shigeki Akiyama, J\"{o}rg~M. Thuswaldner and Toufik Za\"{i}mi,
  \textsl{Characterisation of the numbers which satisfy the height reducing
  property}, Koninklijke Nederlandse Akademie van Wetenschappen. Indagationes
  Mathematicae. New Series \textbf{26} (2015), no.~1, 24--27. \MR{3281686}

\bibitem{BMPV}
Simon Baker, Zuzana Mas\'{a}kov\'{a}, Edita Pelantov\'{a}, and Tom\'a\v{s}
  V\'{a}vra, \textsl{On periodic representations in non-{P}isot bases},
  Monatshefte f\"{u}r Mathematik \textbf{184} (2017), no.~1, 1--19.
  \MR{3683942}

\bibitem{BrFrPeSv}
Marta Brzicov\'a, Christiane Frougny, Edita Pelantov\'a, and Milena
  Svobodov\'a, \textsl{On-line multiplication and division in real and complex
  bases}, preprint (2015).

\bibitem{bugeaud}
Yann Bugeaud, \textsl{On a property of {P}isot numbers and related questions},
  Acta Mathematica Hungarica \textbf{73} (1996), no.~1-2, 33--39. \MR{1415918}

\bibitem{ejk90}
P\'{a}l Erd\"{o}s, Istv\'{a}n Jo\'{o} and Vilmos Komornik,
  \textsl{Characterization of the unique expansions
  {$1=\sum^\infty_{i=1}q^{-n_i}$} and related problems}, Bulletin de la
  Soci\'{e}t\'{e} Math\'{e}matique de France \textbf{118} (1990), no.~3,
  377--390. \MR{1078082}

\bibitem{feng}
De-Jun Feng, \textsl{On the topology of polynomials with bounded integer
  coefficients}, Journal of the European Mathematical Society (JEMS)
  \textbf{18} (2016), no.~1, 181--193. \MR{3438383}

\bibitem{hare}
Kevin~G. Hare, \textsl{The structure of the spectra of {P}isot numbers},
  Journal of Number Theory \textbf{105} (2004), no.~2, 262--274. \MR{2040158}

\bibitem{HMV}
Kevin~G. Hare, Zuzana Mas\'{a}kov\'{a} and Tom\'{a}\v{s} V\'{a}vra, \textsl{On
  the spectra of {P}isot-cyclotomic numbers}, Letters in Mathematical Physics
  \textbf{108} (2018), no.~7, 1729--1756. \MR{3802728}

\bibitem{hepe}
Tom\'{a}\v{s} Hejda and Edita Pelantov\'{a}, \textsl{Spectral properties of
  cubic complex {P}isot units}, Mathematics of Computation \textbf{85} (2016),
  no.~297, 401--421. \MR{3404455}

\bibitem{hutchinson}
John~E. Hutchinson, \textsl{Fractals and self-similarity}, Indiana University
  Mathematics Journal \textbf{30} (1981), no.~5, 713--747. \MR{625600}

\bibitem{KV}
V\'it\v{e}zslav Kala and Tom\'a\v{s} V\'avra, \textsl{Periodic representations
  in algebraic bases}, T. Monatsh Math (2018).

\bibitem{mapape}
Zuzana Mas\'{a}kov\'{a}, Kate\v{r}ina Pastir\v{c}\'{a}kov\'{a} and Edita
  Pelantov\'{a}, \textsl{Description of spectra of quadratic {P}isot units},
  Journal of Number Theory \textbf{150} (2015), 168--190. \MR{3304612}

\bibitem{Neu}
J\"{u}rgen Neukirch, \textsl{Algebraic number theory}, Grundlehren der
  Mathematischen Wissenschaften [Fundamental Principles of Mathematical
  Sciences], vol. 322, Springer-Verlag, Berlin, 1999, Translated from the 1992
  German original and with a note by Norbert Schappacher, With a foreword by G.
  Harder. \MR{1697859}

\bibitem{ren}
Alfred R{\'e}nyi, \textsl{Representations for real numbers and their ergodic
  properties}, Acta Mathematica Academiae Scientiarum Hungaricae \textbf{8}
  (1957), 477--493. \MR{0097374 (20 \#3843)}

\bibitem{Schmidt}
Klaus Schmidt, \textsl{On periodic expansions of {P}isot numbers and {S}alem
  numbers}, The Bulletin of the London Mathematical Society \textbf{12} (1980),
  no.~4, 269--278. \MR{576976 (82c:12003)}

\bibitem{SteThu}
Wolfgang Steiner and J\"{o}rg~M. Thuswaldner, \textsl{Rational self-affine
  tiles}, Transactions of the American Mathematical Society \textbf{367}
  (2015), no.~11, 7863--7894. \MR{3391902}

\bibitem{Thurston}
William~P. Thurston, \textsl{Groups, tilings and finite state automata: Summer
  1989 ams colloquium lectures}, Research report GCG, Geometry Computing Group,
  1989.

\bibitem{sisa}
Ji\v{r}\'{i} \v{S}\'{i}ma and Petr Savick\'{y}, \textsl{Quasi-periodic
  {$\beta$}-expansions and cut languages}, Theoretical Computer Science
  \textbf{720} (2018), 1--23. \MR{3771931}

\bibitem{zaimi}
Toufik Za\"{i}mi, \textsl{On an approximation property of {P}isot numbers.
  {II}}, Journal de Th\'{e}orie des Nombres de Bordeaux \textbf{16} (2004),
  no.~1, 239--249. \MR{2145586}

\end{thebibliography}
\bibliographystyle{ijmart}

\end{document}